\newcommand{\Bm}{\mbox{\BID}^{-}}
\newcommand{\BID}{\textrm{BID}}
\newcommand{\Pow}{\mathcal{P}}
\newcommand{\PP}{\mathbf{P}}
\newcommand{\eqW}{\simeq_{W}}
\newcommand{\Vaan}{V\"a\"an\"anen}
\newcommand{\Var}{\mathcal{V}}
\newcommand{\lrW}{\lrarr_{W}}
\newcommand{\modX}{\models_{X}}
\newcommand{\nmodX}{\not\models_{X}}
\newcommand{\MM}{\mathcal{M}}
\newcommand{\LL}{\mathcal{L}}
\newtheorem{theorem}{Theorem}
\newtheorem{proposition}[theorem]{Proposition}
\newcommand{\vsn}{\vspace{-.1in}}
\newcommand{\somepi}{\exists(\pi)}
\newcommand{\allpi}{\forall(\pi)}
\newcommand{\someH}{\exists_{H}}
\newcommand{\allH}{\forall_{H}}
\newcommand{\someW}{\exists_{W}}
\newcommand{\allW}{\forall_{W}}
\newcommand{\UU}{\mathsf{U}}
\newcommand{\VVV}{\mathsf{V}}
\newcommand{\WW}{\mathsf{W}}
\newcommand{\dset}{{\downarrow}}
\newcommand{\lsem}{\llbracket}
\newcommand{\rsem}{\rrbracket}
\newcommand{\lrarr}{\longrightarrow}
\newcommand{\IFF}{\;\; \Longleftrightarrow \;\;}
\newcommand{\CC}{\mathcal{C}}
\newcommand{\Real}{\mathbb{R}}
\newcommand{\id}[1]{\mathsf{id}_{#1}}
\newcommand{\op}[1]{#1^{\mathsf{op}}}
\newcommand{\CCop}{\op{\CC}}
\newcommand{\Pos}{\mathbf{Pos}}
\newcommand{\ie}{\textit{i.e.}\ }
\newcommand{\pow}[1]{\mathcal{P}(#1)}
\newcommand{\rarr}{\rightarrow}
\newcommand{\linimpl}{\multimap}
\renewcommand{\emph}[1]{\textbf{#1}}
\newcommand{\vn}{\varnothing}
\newcommand{\HH}{\mathcal{H}}
\newcommand{\bit}{\begin{itemize}}
\newcommand{\eit}{\end{itemize}\par\noindent}
\newcommand{\ben}{\begin{enumerate}}
\newcommand{\een}{\end{enumerate}\par\noindent}
\newcommand{\beq}{\begin{equation}}
\newcommand{\eeq}{\end{equation}\par\noindent}
\newcommand{\beqa}{\begin{eqnarray*}}
\newcommand{\eeqa}{\end{eqnarray*}\par\noindent}
\newcommand{\beqn}{\begin{eqnarray}}
\newcommand{\eeqn}{\end{eqnarray}\par\noindent}
\begin{document}
\begin{article}
\begin{opening}

\title{From IF to BI}
\subtitle{A Tale of Dependence and Separation}

\author{Samson \surname{Abramsky}}
\institute{Oxford University Computing Laboratory}
\author{Jouko \surname{V\"a\"an\"anen}}
\institute{ILLC Amsterdam}

\begin{abstract}
We take a fresh look at the logics of informational dependence and independence of Hintikka
 %Jouko corrects Hintakka to Hintikka.
 and Sandu and \Vaan, and their compositional semantics due to Hodges. We show how Hodges' semantics can be seen as a special case of a general construction, which provides a context for a useful completeness theorem with respect to a wider class of models. We shed some new light on each aspect of the logic. We show that  the natural propositional logic carried by the semantics is the logic of Bunched Implications due to Pym and O'Hearn, which combines intuitionistic and multiplicative connectives. This introduces several new connectives not previously considered in logics of informational dependence, but which we show play a very natural r\^ole, most notably intuitionistic implication. As regards the quantifiers, we show that their interpretation in the Hodges semantics is forced, in that they are the image under the general construction of the usual Tarski semantics; this implies that they are adjoints to substitution, and hence uniquely determined. As for the dependence predicate, we show that this is definable from a simpler predicate, of constancy or dependence on nothing. This makes essential use of the intuitionistic implication. The Armstrong axioms for functional dependence are then recovered as a standard set of axioms for intuitionistic implication. We also prove a full abstraction result in the style of Hodges, in which the intuitionistic implication plays a very natural r\^ole.
\end{abstract}

\end{opening}

\section{Introduction}
Our aim in this paper is to take a fresh look at the logics of informational dependence and independence \cite{HS89,HS96,Vaan}, and their compositional semantics due to Wilfrid Hodges \cite{Hod97a,Hod97b}. We shall focus on Dependence Logic, introduced by the second author \cite{Vaan}.

The main objective of Hodges' work was to provide a compositional model-theoretic semantics for the IF-logic of Hintikka and Sandu \cite{HS89,HS96}, which matched their ``game-theoretical semantics''. This was achieved by lifting the standard Tarski semantics of first-order formulas, given  in terms of satisfaction in a structure with respect to an assignment to the free variables, to satisfaction by \emph{sets of  assignments}.

We seek a deeper understanding of Hodges' construction:
\begin{itemize}
\item First and foremost, what is going on? Where does the Hodges construction come from? Is it canonical in any way? Why does it work? What structures are really at play here?

\item Because of the equivalence of Dependence Logic (or variants such as IF-logic) under this semantics to  (a significant fragment of) second-order logic, there is no hope for a completeness theorem. But we may get a useful completeness theorem with respect to a wider class of models. Understanding the general algebraic context for the semantics points the way to such a completeness notion.

\item We can  also look for \emph{representation theorems}, with some infinitary ingredients.
\end{itemize}
The results of our investigation are quite surprising conceptually (at least to us). The main points can be summarized as follows.
\begin{itemize}
\item We find a general context for Hodges' construction. We shall not treat it in full generality here, as the general account is best stated in the language of categorical logic \cite{Law69,Pitts}, and we wish to avoid undue technicalities. However, we will indicate the possibilities for a general algebraic semantics, as the basis for a useful completeness theorem.
\item We find that the natural propositional logic associated with the Hodges construction is the \emph{logic of Bunched Implication} of Pym and O'Hearn \cite{OHP99,Pym02}, which combines intuitionistic and multiplicative linear connectives. 
\item This not only yields a more natural view of the strangely asymmetric notions of conjunction and disjunction in the Hodges semantics (one is intuitionistic, while ``disjunction'' is actually multiplicative \emph{conjunction}!),  it also brings into prominence some connectives not previously considered in the setting of IF-logic or Dependence logic, in particular \emph{intuitionistic implication}. This enables a novel analysis of the Dependence predicate of \cite{Vaan}, as a Horn clause with respect to a more primitive predicate of single-valuedness. The well-known Armstrong axioms for functional dependence \cite{Arm} then fall out as a standard axiomatization of intuitionistic (but not classical!) implication.
\item Intuitionistic implication also plays a natural r\^ole in our version of a full abstraction theorem in the sense of Hodges.
\item The construction is shown to lift the interpretation of the standard quantifiers in a canonical way, so that \emph{quantifiers are uniquely determined as the adjoints to substitution} \cite{Law69}, just as in the standard Tarski semantics of first-order logic. This is also extended to characterizations of the dependence-friendly quantifiers of \cite{Vaan} as adjoints.
\end{itemize}

\noindent The plan of the remainder of the paper is as follows. In the next section we provide background on branching quantifiers, IF-logic, dependence logic, and Hodges'  semantics. Then  in section~3 we show how the Hodges semantics is an instance of a general algebraic construction, in which the connectives of BI-logic arise naturally. In section~4, we show that the interpretation of the quantifiers in the Hodges construction is the canonical lift of the standard  interpretation of the quantifiers as adjoints, and hence is uniquely determined.  We also use the intuitionistic implication to show how the dependence-friendly quantifiers can be interpreted as certain adjoints. In section~5, we  show how the intuitionistic implication arises naturally in the proof of a full abstraction theorem. In section~6, we show how the dependence predicate can be analyzed in terms of a more primitive predicate of single-valuedness, using the intuitionistic implication. This turns the ``Armstrong axioms'' into standard theorems of intuitionistic implicational logic. The final section outlines some further directions.

\section{Dependence, Independence and Information Flow}
We begin with a standard example: the formal definition of continuity for a function $f : \Real \lrarr \Real$ on the real numbers.
\[ \forall x. \, \forall \epsilon. \, \exists \delta . \, \forall x'. \, | x - x'| < \delta \; \Rightarrow \; | f(x) - f(x') | < \epsilon \, . \]
This definition is often explained in current calculus courses in terms of an ``epsilon-delta game''.\endnote{See e.g. online resources such as\\ \texttt{http://library.wolfram.com/infocenter/MathSource/4734/}.} The Adversary proposes a number, $\epsilon$, as a measure of how close we must stay to the value of $f(x)$; we must then respond with a number, $\delta$, such that, whenever the input is within the interval $(x - \delta, x+\delta)$, the output does indeed pass the $\epsilon$-test of closeness to $f(x)$. Clearly, the choice of $\delta$ will depend on that of $\epsilon$; the nesting of the quantifiers expresses this dependency.

This is the definition of \emph{global continuity} of $f$, expressed in terms of \emph{local continuity} at every point $x$. This means that the choice of $\delta$ will depend, not only on $\epsilon$, but on $x$ also. Now consider the definition of \emph{uniform continuity}:
\[ \forall \epsilon. \, \exists \delta . \, \forall x. \, \forall x'. \, | x - x'| < \delta \; \Rightarrow \; | f(x) - f(x') | < \epsilon \, . \]
Here $\delta$ still depends on $\epsilon$, \emph{but must be chosen independently of $x$}. This variation in dependency is tracked syntactically by the different order of the quantifiers.  Indeed,  it seems that it was only after the distinction between pointwise and uniform notions of continuity, and, especially, convergence, had been clarified in 19th-century analysis, that the ground was prepared for the introduction of predicate calculus.

More generally, dependence or independence of bounds on various parameters is an important issue in many results on estimates in number theory and analysis.
Hodges quotes a nice example from one of Lang's books \cite{Lan64} in \cite{Hod97a}.

Intuitively, there is an evident relation between these notions and that of \emph{information flow}. Dependence indicates a form of  information flow; independence is  the \emph{absence} of information flow.

\subsection{Beyond first-order logic}
It turns out that mere rearrangement of the order of quantifiers in first-order formulas is not sufficient to capture the full range of possibilities for informational dependence and independence. This was first realized almost 50~years ago, with Henkin's introduction of  \emph{branching quantifiers} \cite{Hen61}. The simplest case is the eponymous \emph{Henkin quantifier}:
\[ \left( \begin{array}{cc} \forall x & \exists y \\
\forall u & \exists v
\end{array}   \right) A(x, y, u, v) .
\]
The intention is that $y$ must be chosen depending on $x$, but \emph{independently} of the choice of $u$; while $v$ must be chosen depending on $u$, but \emph{independently} of the choice of $x$.
The meaning of this formula can be explicated by introducing \emph{Skolem functions} $f$ and $g$: an equivalent formula will be
\[ \exists f. \, \exists g. \, \forall x. \, \forall u. \, A(x, f(x), u, g(u)) . \]
Here the constraints on dependencies are tracked by the dependence of the Skolem functions on certain variables, but not on others.
Note that the Skolemized sentence is \emph{second-order}; in fact, it belongs to the $\Sigma_{1}^{1}$ fragment of second-order logic.\endnote{This can be described as the fragment comprising formulas $\exists f_{1} \ldots \exists f_{n}. \, \phi$,  where the $f_{i}$ are function variables, and $\phi$ is a first-order formula over a signature extended by these function variables.} This second-order rendition of the meaning of the Henkin quantifier cannot be avoided, in the sense that  the Henkin quantifier strictly increases the expressive power of first-order logic, and in fact the extension of first-order logic with the Henkin quantifier is equivalent in expressive power to  the $\Sigma_{1}^{1}$ fragment \cite{Hen61}.

\paragraph{Examples}
\begin{enumerate}
\item Consider
\[ \left( \begin{array}{cc} \forall x & \exists y \\
\forall u & \exists v
\end{array} \right)  (A(x) \rarr B(y)) \; \wedge \; (B(u) \rarr A(v)) \; \wedge \; [(x = v) \; \leftrightarrow \;  (y = u)].
\]
This expresses that $A$ and $B$ are equinumerous sets.
\item Now consider
\[   \begin{array}{ll}
\exists v. \, \left( \begin{array}{cc} \forall x_{1} & \exists y_{1} \\
\forall x_{2} & \exists y_{2}
\end{array} \right) (A(x_{1}) \rarr A(y_{1}))   & \wedge \; [(x_{2} = y_{1}) \; \rightarrow \;  (y_{2} = x_{1})] \\    & \wedge \; A(v)  \; \wedge \;  (A(x_{1}) \rightarrow (y_{1} \neq v)) \, .
\end{array}
\]
This expresses that $A$ is an infinite set.
\end{enumerate}
These examples show that the Henkin quantifier \emph{is not expressible in first-order logic}.

\subsection{Further developments}
The next major development was the introduction of IF-logic (``inde\-pendence-friendly logic'') by Jaakko Hintikka and Gabriel Sandu \cite{HS89}.
The intention of IF-logic is to highlight informational dependence and independence. It provides a linear syntax for expressing branching quantification (and more), e.g.  the Henkin quantifier can be written in linear notation as:
\[ \forall x. \, \exists y. \, \forall u. \, (\exists v/x). \, A(x, y, u, v) \]
The ``slashed quantifier''  $(\exists v/x)$  has the intended reading ``there exists a $v$ \emph{not depending on $x$}''. Note the strange syntactic form of this quantifier, with its ``outward-reaching'' scope for $x$.

\paragraph{Dependence Logic}
A simplified approach was introduced by the second author, and developed extensively in the recent monograph \cite{Vaan}.
The main novelty in the formulation of the logic is to use an atomic \emph{dependence predicate\endnote{In \cite{Vaan}
%Jouko adds an endnote
the notation $=\!\!(x_{1}, \ldots , x_{n}, x)$ for $D(x_{1}, \ldots , x_{n}, x)$ is used.}} $D(x_{1}, \ldots , x_{n}, x)$ which holds if \emph{$x$ depends on $x_{1}, \ldots , x_{n}$, and  only on these variables}.
We can then define ``dependence-friendly quantifiers'' as standard quantifiers guarded with the dependence predicate:
\[ (\exists x \setminus x_{1}, \ldots , x_{n}). \, \phi \;\; \equiv \;\; \exists x. (D(x_{1}, \ldots , x_{n}, x) \; \wedge \; \phi) \, . \]
This yields essentially the same expressive power as IF-logic.

\subsection{Compositionality: Hodges' Semantics}

But, what does it all mean?
Hintikka claimed that \emph{a compositional semantics for IF logic could not be given} \cite{Hin98}.
Instead he gave a ``Game-Theoretical Semantics'', essentially reduction to Skolem form as above.

Wilfrid Hodges showed that it could \cite{Hod97a,Hod97b}.\endnote{Hintikka has apparently not conceded the point \cite{Hin02}, although there is no argument as to the mathematical content of Hodges' results. As far as we are concerned, Hodges' semantics meets all the criteria for a compositional semantics, and is moreover fully abstract. Our concern here is to understand it better, as an interesting construction in its own right.}

Before giving Hodges' construction, it will be useful firstly to recall  Tarski's solution to the problem of how to define the truth of a sentence in a first-order structure $\mathcal{M} = (A, \ldots)$ with underlying set $A$.\endnote{The classic reference is \cite{Tar36}, but in fact the modern model-theoretic definition first appeared in \cite{TV56}, as pointed out in Wilfrid Hodges' article on ``Tarski's Truth Definitions'' in the Stanford Encyclopedia of Philosophy, available online at \texttt{http://plato.stanford.edu/entries/tarski-truth/}, which gives an informative overview.} In order to do this, he had to deal with the more general case of open formulas. The idea was to define
\[ \MM, s \models_{X} \phi \]
where $X$ is a finite set of variables including those occurring free  in $\phi$, and $s$ is an assignment of elements of $A$ to $X$.\endnote{Explictly, an assignment is simply a function $s : X \rarr A$. We write $A^{X}$ for the set of all such assignments. Older tradition was to define satisfaction relative to assignments to \emph{all} variables, which were typically arrayed in infinite sequences. More recently, it has been understood, under the influence of categorical logic,  that to reveal the salient structure one should give the definition relative to a finite environment that  grows as quantifiers are stripped off in the recursive definition.}
Typical clauses include:
\[ \begin{array}{lcl}
\MM, s \modX \phi \wedge \psi & \equiv & \MM, s \modX \phi \;\; \mbox{and} \;\; \MM, s \modX \psi \\
\MM, s \modX \neg \phi & \equiv & \MM, s \nmodX \phi \\
\MM, s \modX \forall v. \, \phi & \equiv & \forall a \in A. \; \MM, s[v \mapsto a] \models_{X \cup \{v\}} \phi \\
\MM, s \modX \exists v. \, \phi & \equiv & \exists a \in A. \; \MM, s[v \mapsto a] \models_{X \cup \{v\}} \phi \\
\end{array}
\]
Here $s[v\mapsto a]$ is the assignment defined on $X \cup \{v\}$ as follows: $s[v \mapsto a](v) = a$, and $s[v \mapsto a](w) = s(w)$ for $w \neq v$.

The is the very prototype of a compositional semantic definition.
Via Dana Scott, this idea led to the  use of \emph{environments} in denotational semantics \cite{Sco69}. Environments are nowadays ubiquitous in all forms of semantics in computer science \cite{Win93,Mit96}.

\paragraph{Teams}
Hodges' key idea was to see that one must lift the semantics of formulas from single assignments to \emph{sets of assignments}. Notions of dependence of one variable on others are only meaningful among a set of assignments. Hodges called these sets ``trumps''; we follow \cite{Vaan} in calling them \emph{teams}.

We consider the semantics of Dependence logic \cite{Vaan}. Formulas are built up from standard atomic formulas and their negations and  the dependence predicates, by conjunction, disjunction, and universal and existential quantification. We shall distinguish between the usual atomic formulas (including equality statements) over the first-order signature we are working with, and the dependence formulas. In the case of the standard atomic formulas, we shall also allow their negations, and as usual refer to positive and negated atomic formulas collectively as \emph{literals}. We shall \emph{not} allow negations of dependence formulas; we will see later how to access negative information about dependence, using the new connectives we will introduce in the next section.

The set of all individual variables is denoted $\Var$.
A \emph{team on $X \subseteq \Var$} is a set of Tarski assignments on $X$. We define the following operations on teams:

\begin{itemize}
\item If $T$ is a team on $X$ and $v \in \Var$,  then $T[v \mapsto A]$ is the team on $X \cup \{v\}$ defined by:

\vsn
\[ T[v \mapsto A] = \{ t[v \mapsto a] \mid t \in T \; \wedge \; a \in A \} . \]

\item If $T$ is a team on $X$, $v \in \Var$, and $f : T \lrarr A$, then $T[v \mapsto f]$ is the team on $X \cup \{v\}$ defined by:
\[ T[v \mapsto f] = \{ t[v \mapsto f(t)] \mid t \in T \} . \]
\end{itemize}

\paragraph{The Satisfaction Relation}

We define a satisfaction relation
\[ \MM, T \modX \phi \]
where the free variables of $\phi$ are contained in $X$, and $T$ is a team on $X$. (In practice, we elide $\MM$).

Firstly, for literals $L$  we have:
\[ T \modX L \;\; \equiv \;\; \forall t \in T. \, t \modX L  \]
where $t \modX L$ is the standard Tarskian definition of satisfaction of an atomic formula or its negation  in a structure with respect to an assignment.

\paragraph{Connectives and Quantifiers}
The  clauses for connectives and quantifiers are as follows:

\[ \begin{array}{lcl}
T \modX \phi \wedge \psi & \equiv & T \modX \phi \;\; \mbox{and} \;\; T \modX \psi \\
T \modX \phi \vee \psi & \equiv & \exists \, U, V. \; ([U \modX \phi \;\; \mbox{and} \;\; V \modX \psi] \;\; \wedge \;\; [T = U \cup V]) \\
T \modX \forall v. \, \phi & \equiv & T[v \mapsto A] \models_{X \cup \{v\}} \phi \\
T \modX \exists v. \, \phi & \equiv & \exists f : T \lrarr A. \; T[v \mapsto f] \models_{X \cup \{v\}} \phi .
\end{array}
\]

\paragraph{Semantics of the dependence predicate}
Given a set of variables $X$ and $W \subseteq X$, we define the following notions:

\begin{itemize}
\item An equivalence relation on  assignments on $X$:
\[ s \eqW t \;\; \equiv \;\;  \forall w \in W. \, s(w) = t(w) . \]

\item A function $f : A^{X} \lrarr A$ \emph{depends only on $W$}, written $f : A^{X} \lrW A$, if for some $g : A^{W} \lrarr A$, $f = g \circ p_{XW}$, where $p_{XW} : A^{X} \lrarr A^{W}$ is the evident projection. Note that if such a $g$ exists, it is unique.
\end{itemize}

\noindent Now we can define:
\[ T \modX D(W, v) \;\; \equiv \;\; \forall s, t \in T. \, s \eqW t \; \Rightarrow \; s(v) = t(v) \]
Note that this expresses \emph{functional dependence}, exactly as in database theory \cite{Arm}.

\noindent An equivalent definition can be given in terms of the dependency condition on functions:
\[ T \modX D(W, v) \;\; \equiv \;\; \exists f : T \lrW A. \, \forall t \in T. \, t(v) = f(t) .  \]

\noindent Strictly speaking, this is the ``positive part'' of the definition as given in \cite{Vaan} following Hodges. There is also a negative part, which defines satisfaction for $\phi$ as for the positive definition, but with respect to the De Morgan dual $\phi^{d}$ of $\phi$:
\[ (\phi \vee \psi)^{d} = \phi^{d} \wedge \psi^{d}, \quad (\exists v. \, \phi)^{d} = \forall v. \, \phi^{d}, \quad \mbox{etc.} \]
This allows for a ``game-theoretic negation'', which formally ``interchanges the r\^oles of the players''. It is simpler, and from our perspective loses nothing, to treat this negation as a \emph{defined operation}, and work exclusively  with formulas in negation normal form as above.

\noindent The theory of dependence logic: metalogical properties, connections with second-order logic, complexity and definability issues,  \textit{et cetera}, is extensively developed in \cite{Vaan}. However, as explained in the Introduction, many basic questions remain.
We shall now show how the Hodges semantics can be seen in a new light, as arising from a general construction.

\section{The Hodges construction revisited}
An important clue to the general nature of the construction is contained in the observation by Hodges \cite{Hod97a} (and then in \cite{Vaan}) that the sets of teams denoted by formulas of  IF-logic or Dependence logic are  \emph{downwards closed}: that is, if $T \models  \phi $ and $S \subseteq T$, then $S \models \phi$.
This is immediately suggestive  of well-known constructions on ordered structures.

\subsection{A general construction}
We recall a couple of definitions. A \emph{commutative ordered monoid} is a structure $(M, {+}, 0, {\leqslant})$, where $(M, {\leqslant})$ is a partially ordered set, and $(M, {+}, 0)$ is a commutative monoid (a set with an associative and commutative operation $+$ with unit $0$), such that $+$ is monotone:
\[ x \leqslant x' \; \wedge \; y \leqslant y' \;\; \Rightarrow \;\; x+y \leqslant x' + y' \, . \]
The primary example we have in mind is $\pow{A^{X}}$, the set of all teams on a set of variables $X$, which we think of as the commutative ordered monoid $(\pow{A^{X}}, {\cup}, \vn, {\subseteq})$.

A \emph{commutative quantale} is a commutative ordered monoid where the partial order is a complete lattice, and $+$ distributes over all suprema: $m + \bigvee_{i \in I} m_{i} = \bigvee_{i \in I} (m + m_{i})$.

Let $(M, {+}, 0, {\leqslant})$ be a commutative ordered monoid. Then $\LL(M)$, the set of lower (or downwards-closed) sets of $M$, ordered by inclusion, is the \emph{free commutative quantale} generated by $M$ \cite{MS01}.\endnote{More precisely, it is the left adjoint to the evident forgetful functor.}

A downwards closed subset of a partially ordered set $P$ is a set $S$ such that:
\[ x \leqslant y \in S \; \Rightarrow \; x \in S \, . \]
Thus this notion generalizes the downwards closure condition on sets of teams.
 
\noindent The following notation will be useful.
Given $X \subseteq P$, where $P$ is a partially ordered set, we define
\[ \dset(X) = \{ x \in P \mid \exists y \in X. \, x \leqslant y \} \, , \]
the \emph{downwards closure} of $X$. A set $S$ is downwards closed if and only if $S = \dset(S)$.

As a commutative quantale, $\LL(M)$ is a model of intuitionistic linear logic (phase semantics \cite{Yet,Ros90,Gir87}).\endnote{It is also an instance of Urquhart's semilattice semantics for relevance logic \cite{Urq72}. Mitchell and Simmons observe in \cite{MS01} that in the case (such as ours) where the monoid is a boolean algebra, $\LL(M)$ is actually a model of \emph{classical linear logic}. This does not seem apposite to our purposes here.}
In particular, we have
\[ \begin{array}{rcl}
A \otimes B & = &  {\downarrow}\{ m + n \mid m \in A \; \wedge \; n \in B \} \\
A \linimpl B & = & \{ m \mid \forall n. \, n \in A \Rightarrow m + n \in B \}
\end{array} \]
We note that when the definition of $\otimes$, the multiplicative \emph{conjunction}, is specialized to our concrete setting, it yields the definition of \emph{disjunction} in the Hodges semantics!

The multiplicative implication $\linimpl$ has not been considered previously in the setting of IF-logic and Dependence logic. However, it is perfectly well defined, and is in fact uniquely specified as the adjoint of the linear conjunction:
\[ A \otimes B \leqslant C \;\; \Longleftrightarrow \;\; A \leqslant B \linimpl C \, . \]
Note that linear implication automatically preserves downwards closure.

\subsection{What is the propositional logic of dependence?}
In fact, $\LL(M)$ carries a great deal of structure. Not
 %Jouko corrects Note to Not
 only is it a commutative quantale (and hence carries an interpretation of linear logic), but it is also a \emph{complete Heyting algebra}, and hence carries an interpretation of intuitionistic logic.

We have the clauses
\[ \begin{array}{lcl}
m \models A \wedge B & \equiv & m \models A \; \mbox{and} \; m \models B \\
m \models A \vee B & \equiv & m \models A \; \mbox{or} \; m \models B \\
m \models A \rarr B & \equiv & \forall n \leqslant m. \, \mbox{if} \; n \models A \; \mbox{then} \; n \models B
\end{array}
\]
The situation where we have both intuitionistic logic and multiplicative linear logic coexisting is the setting for \emph{BI logic}, the ``logic of Bunched Implications''  of David Pym and Peter O'Hearn \cite{OHP99,Pym02}, which forms the basis for \emph{Separation logic} (Reynolds and O'Hearn) \cite{Rey02}, an increasingly influential logic for verification.
The construction $\LL(M)$ is exactly the way a ``forcing semantics'' for BI-logic is converted into an algebraic semantics as a ``BI-algebra'', \ie a structure which is both a commutative quantale  \emph{and} a complete Heyting algebra \cite{POHY04}. $\LL(M)$ is in fact the free construction of a complete BI-algebra over an ordered commutative monoid.

This provides one reason for proposing BI-logic as the right answer to the question posed at the beginning of this subsection. The compelling further evidence for this claim will come from the natural
r\^ole played by the novel connectives we are introducing into the logic of dependence. This r\^ole will become apparent in the subsequent developments in this paper.

\subsection{\BID-logic and its team semantics}
We shall spell out the extended logical language we are led to consider, and its concrete team semantics, extending the Hodges-style semantics already given in section~2.

We call the extended language \BID, for want of a better name. Formulas are built from atomic formulas and their negations, and  dependence formulas, by the standard first-order quantifiers, and the following propositional connectives: the intuitionistic (or ``additive'') connectives $\wedge$, $\vee$, $\rarr$, and the multiplicative connectives $\otimes$ and $\linimpl$.

\paragraph{Team Semantics for BI Logic}
The team semantics for \BID-logic is as follows:
\[ \begin{array}{lcl}
T \models A \wedge B & \equiv & T \models A \; \mbox{and} \; T \models B \\
T \models A \vee B & \equiv & T \models A \; \mbox{or} \; T \models B \\
T \models A \rarr B & \equiv & \forall U \subseteq T. \, \mbox{if} \; U \models A \; \mbox{then} \; U \models B \\
T \models A \otimes B & \equiv & \exists U, V. \, T = U \cup V \; \wedge \;  U \models A \; \wedge \; V \models B  \\
T \models A \linimpl B & \equiv & \forall U. \, [U \models A \Rightarrow T \cup U \models  B ]
\end{array}
\]
The clauses for atomic formulas and their negations and for the dependence formulas and quantifiers are as given in section~2.

As already noted, the semantics of $\wedge$ and $\otimes$ coincide with those given for conjunction and disjunction in section~2. The connectives $\vee$ and $\rarr$, intuitionistic or additive disjunction and implication,  and the multiplicative implication $\linimpl$, are new as compared to IF-logic or Dependence logic.

\subsection{The semantics of sentences}
It is worth spelling out the semantics of sentences explicitly. By definition, sentences have no free variables, and there is only one assignment on the empty set of variables, which we can think of as the empty tuple $\langle \rangle$. In the Tarski semantics, there are only two possibilities for the set of satisfying assignments of a sentence, $\vn$ and $\{ \langle \rangle \}$, which we can identify with \emph{false} and \emph{true} respectively. When we pass to the team semantics for \BID-logic, there are three possibilities for down-closed set of teams to be assigned to sentences: $\vn$, $\{ \vn \}$, or $\{ \vn, \{ \langle \rangle \} \}$. Thus the semantics of sentences is \emph{trivalent} in general.

In his papers, Hodges works only with non-empty teams, and has bivalent semantics for sentences. However, there is no real conflict between his semantics and ours. Let $\Bm$ be \BID-logic without the linear implication. Note that $\Bm$ properly contains Dependence logic, which is expressively equivalent to IF-logic \cite{Vaan}.

\begin{proposition}
Every formula in $\Bm$-logic is satisfied by the empty team; hence in particular every sentence of $\Bm$-logic has either $\{ \vn \}$ or $\{ \vn, \{ \langle \rangle \} \}$ as its set of satisfying teams, and the semantics of sentences in $\Bm$-logic is bivalent.
\end{proposition}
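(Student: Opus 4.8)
\emph{Proof proposal.} The plan is to establish the following key fact by structural induction on the formulas $\phi$ of $\Bm$: for every finite $X \subseteq \Var$ containing the free variables of $\phi$, the empty team satisfies $\phi$, that is $\vn \modX \phi$. The bivalence statement for sentences will then follow at once. First I would dispatch the base cases. For a literal $L$ the empty team satisfies $L$ vacuously, since $T \modX L$ abbreviates $\forall t \in T.\, t \modX L$ and there is no $t \in \vn$; likewise the dependence clause $\forall s, t \in T.\, (s \eqW t \Rightarrow s(v) = t(v))$ is vacuously true when $T = \vn$, so $\vn \modX D(W, v)$.

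For the inductive step, the driving observation is that the empty team is a fixed point of every team operation occurring in the remaining clauses: $\vn[v \mapsto A] = \vn$; for the unique (empty) function $f : \vn \lrarr A$ one has $\vn[v \mapsto f] = \vn$; the only subteam of $\vn$ is $\vn$ itself; and $\vn = \vn \cup \vn$. Using these, each clause reduces directly to the induction hypothesis: for $\wedge$ and $\vee$ immediately; for $\otimes$ by choosing the splitting $\vn = \vn \cup \vn$ and invoking the hypothesis on both factors; for $\rarr$ because the only $U \subseteq \vn$ is $\vn$ while the hypothesis supplies $\vn \models \psi$ outright; and for $\forall v.\, \phi$ and $\exists v.\, \phi$ because both $\vn[v \mapsto A]$ and $\vn[v \mapsto f]$ equal $\vn$, so satisfaction reduces to the hypothesis for $\phi$ relative to $X \cup \{v\}$.

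The one genuinely substantive point — and the place I expect the main conceptual obstacle — is recognizing precisely why the multiplicative implication must be excluded. Its clause $T \models A \linimpl B \equiv \forall U.\, (U \models A \Rightarrow T \cup U \models B)$ quantifies over \emph{all} teams $U$ rather than over subteams of $T$; when $T = \vn$ it becomes $\forall U.\, (U \models A \Rightarrow U \models B)$, which is not vacuous and generally fails. Thus the emptiness of $T$ provides no leverage, and this clause is exactly what would break the induction. This both explains why the result is stated for $\Bm$ rather than full \BID-logic and matches the earlier observation that the general semantics of sentences is trivalent once $\linimpl$ is present.

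Finally I would derive the consequence for sentences. A sentence has $X = \vn$, and the assignments on $\vn$ reduce to the single empty tuple $\langle \rangle$, so the only teams are $\vn$ and $\{ \langle \rangle \}$. The set of satisfying teams of any formula is downwards closed, and the downwards-closed subsets of $\{ \vn, \{ \langle \rangle \} \}$ are exactly $\vn$, $\{ \vn \}$, and $\{ \vn, \{ \langle \rangle \} \}$ — the set $\{ \{ \langle \rangle \} \}$ being ruled out since it omits $\vn \subseteq \{ \langle \rangle \}$. By the key fact just proved, $\vn$ lies in the set of satisfying teams of every $\Bm$-sentence, which eliminates the value $\vn$ and leaves precisely the two options $\{ \vn \}$ and $\{ \vn, \{ \langle \rangle \} \}$. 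Hence the semantics of sentences in $\Bm$-logic is bivalent, as claimed.
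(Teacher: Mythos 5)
Your proposal is correct and follows exactly the route the paper takes: the paper's proof is simply ``a straightforward induction on formulas of $\Bm$-logic,'' and you have filled in precisely the right details --- the vacuous base cases, the fact that $\vn$ is fixed by every team operation in the remaining clauses, and the downward-closure argument for bivalence of sentences. Your observation about why $\linimpl$ must be excluded matches the paper's remark immediately following the proposition.
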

\begin{proof}
A straightforward induction on formulas of $\Bm$-logic.
\end{proof}

\noindent On the other hand, linear implication clearly violates this property. Note that the empty team satisfies $A \linimpl B$ if and only if  every team satisfying $A$ also satisfies $B$.
We obtain as an immediate corollary:

\begin{proposition}
Linear implication is not definable in $\Bm$-logic, and \textit{a fortiori}~is not definable in Dependence logic or IF-logic.
\end{proposition}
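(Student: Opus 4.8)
The plan is to play the remark just recorded off against the first Proposition. We have observed that the empty team satisfies $A \linimpl B$ precisely when every team satisfying $A$ also satisfies $B$, whereas the first Proposition guarantees that every formula of $\Bm$-logic is satisfied by the empty team. So to refute definability it is enough to produce formulas $A$ and $B$, together with a team $T$, such that $T \models A$ but $T \not\models B$: then $\vn \not\models A \linimpl B$, and since every $\Bm$-formula holds of $\vn$, the formula $A \linimpl B$ can be logically equivalent to no $\Bm$-formula. A single such instance suffices on any reading of ``definable'' --- whether one wants a uniform $\Bm$-scheme in the arguments $A, B$, or merely some $\Bm$-formula matching each particular instance.

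Concretely, I would fix a structure $\MM$ with at least two elements in its carrier and take $A \equiv (x = x)$, which every team satisfies, together with $B \equiv D(\vn, x)$, the dependence predicate asserting that $x$ depends on nothing, \ie that $x$ is constant throughout the team. Letting $T$ be a team on $\{ x \}$ in which $x$ assumes two distinct values then yields $T \models A$ and $T \not\models B$, the desired separating witness. The merit of this choice is that both $A$ and $B$ are patently formulas of the language --- indeed already of Dependence logic --- so there is no delicate syntactic check to perform.

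There is no genuinely hard step here; the result is an immediate corollary of the first Proposition, as the surrounding text already signals. The one point requiring care is that the separating team really exist inside the semantics, which is exactly why one asks that $\MM$ have at least two elements: over a one-element carrier every assignment is constant, $D(\vn, x)$ holds vacuously, and no witness can be found. Finally, the \textit{a fortiori} clause is immediate from the inclusion of languages. Since $\Bm$-logic properly contains Dependence logic, which is expressively equivalent to IF-logic, the witnesses $x = x$ and $D(\vn, x)$ already belong to these weaker logics, every formula of which is a $\Bm$-formula and hence satisfied by $\vn$; so the very same $A \linimpl B$ remains undefinable there.
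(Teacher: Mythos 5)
Your proof is correct and is essentially the argument the paper intends: the result is presented there as an immediate corollary of the fact that every $\Bm$-formula is satisfied by $\vn$ together with the observation that $\vn \models A \linimpl B$ iff every team satisfying $A$ satisfies $B$. You merely make the separating witness explicit (with $A \equiv (x=x)$, $B \equiv D(\vn,x)$, and a two-element carrier), which is a sound and welcome elaboration rather than a different route.
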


%Hodges works only with non-empty teams; we find it more natural, following \cite{Vaan}, to allow the empty team, but to require our down-closed sets of teams to be non-empty.\endnote{Technically, we cut down the $\LL$ construction to $\LL^{+}$, which takes only the non-empty down-closed subsets. This is a satisfactory replacement for $\LL$ in the context where we apply it to ordered monoids with a least element, and homomorphisms which preserve the least element --- which is the case here.}
% Note that every non-empty down-closed set of teams includes the empty team. Thus our stipulation has the same effect as Hodges', and similarly ensures the bivalence of the semantics of sentences, even though \BID-logic in general is very far from classical.
\subsection{The general Hodges construction}

We shall briefly sketch, for the reader conversant with categorical logic, the general form of the construction.

The standard Tarski semantics of first-order logic is a special case of Lawvere's notion of hyperdoctrine \cite{Law69}. We refer to \cite{Pitts} for a lucid expository account. Construing $\LL$ as a functor in the appropriate fashion, we can give a general form of the Hodges construction as  a functor from classical hyperdoctrines to BI-hyperdoctrines \cite{BBTS05}.
Given a classical hyperdoctrine $\PP : \CCop \lrarr \Pos$, we define a BI-hyperdoctrine  $\HH(\PP)$  on the same base category by composition with the functor $\LL$:
\[ \HH(\PP) = \LL \circ \PP : \CCop \lrarr \Pos \, . \]
Note that $\Pos$ is an order-enriched category, and $\LL$ is an order-enriched functor, so it preserves adjoints, and hence in particular preserves the interpretations of the quantifiers.
This observation is spelled out in more detail in Proposition~\ref{qadjprop}.

This exactly generalizes the concrete Hodges construction, which is obtained by applying $\HH$ to the standard Tarski hyperdoctrine.

A full account will be given elsewhere.

\section{Quantifiers are adjoints in the Hodges construction}
We recall the team semantics for the quantifiers.
\[ \begin{array}{lcl}
T \modX \forall v. \, \phi & \equiv & T[v \mapsto A] \models_{X \cup \{v\}} \phi \\
T \modX \exists v. \, \phi & \equiv & \exists f : T \lrarr A. \; T[v \mapsto f] \models_{X \cup \{v\}} \phi .
\end{array}
\]
We may wonder what underlying principles dictate these definitions.

To answer this question, we firstly recall the fundamental insight due to Lawvere \cite{Law69} that \emph{quantifiers are adjoints to substitution}.\endnote{See \cite{DP02} for an introduction to adjunctions on posets.}

\subsection{Quantifiers as adjoints}
Consider a function $f : X \rarr Y$. This induces a function
\[ f^{-1} : \pow{Y} \lrarr \pow{X} :: T \mapsto \{ x \in X \mid f(x) \in T \} . \]
This function $f^{-1}$ has both a left adjoint $\exists (f) : \pow{X} \lrarr \pow{Y}$, and a right adjoint $\forall (f) : \pow{X} \lrarr \pow{Y}$. These adjoints are uniquely specified by the following conditions. For all $S \subseteq X$, $T \subseteq Y$:
\[ \exists(f)(S) \subseteq T \IFF S \subseteq f^{-1}(T), \;\; \quad  f^{-1}(T) \subseteq S \IFF  T \subseteq \forall(f)(S) . \]
The unique functions satisfying these conditions can be defined explicitly as follows:
\[ \begin{array}{lcl}
\exists(f)(S) &:= & \{ y \in Y \mid \exists x \in X. \, f(x) = y \; \wedge \; x \in S \}\,,  \\
\forall(f)(S) &:= & \{ y \in Y \mid \forall x \in X. \, f(x) = y \; \Rightarrow \; x \in S \}\,.
\end{array}
\]
\noindent Given a formula $\phi$ with free variables in $\{ v_{1}, \ldots , v_{n+1} \}$, it will receive its Tarskian denotation $\lsem \phi \rsem$ in $\Pow(A^{n+1})$ as the set of satisfying assignments:
\[ \lsem \phi \rsem = \{ s \in A^{n+1} \mid s \modX \phi \} \, . \]
We have a projection function
\[ \pi : A^{n+1} \lrarr A^{n} \; :: (a_{1}, \ldots , a_{n+1}) \mapsto (a_{1}, \ldots , a_{n}) \, . \]
Note that this projection is the Tarskian denotation of the tuple of terms $(v_{1}, \ldots , v_{n})$.
We can characterize the standard quantifiers as \emph{adjoints to this projection}:
\[ \lsem \forall v_{n+1}. \, \phi \rsem = \forall(\pi)(\lsem \phi \rsem), \qquad  \lsem \exists v_{n+1}. \, \phi \rsem = \exists(\pi)(\lsem \phi \rsem) \, . \]
If we unpack the adjunction conditions for the universal quantifier, they yield the following bidirectional inference rule:
\[ \infer=[\quad X = \{ v_{1}, \ldots , v_{n} \} \, . ]{\Gamma \vdash_{X} \forall v_{n+1}. \, \phi}{\Gamma \vdash_{X} \phi} \]
Here the set $X$ keeps track of the free variables  in the assumptions $\Gamma$. Note that
the usual ``eigenvariable condition'' is automatically taken care of in this way.

Since adjoints are uniquely determined, this characterization completely captures the meaning of the quantifiers.

\subsection{Quantifiers in the Hodges semantics}

We shall now verify that the definitions of the  quantifiers in the Hodges semantics \emph{are exactly the images under $\LL$ of their standard interpretations in the Tarski semantics}, and hence in particular that they are adjoints to substitution. Thus these definitions are \emph{forced}.

It will be convenient to work with the semantic view of quantifiers, as operators on subsets. Consider formulas with free variables in $\{ v_{1}, \ldots , v_{n+1} \}$. The Tarski semantics over a structure $\mathcal{M} = (A, \ldots)$ assigns such formulas values in $\pow{A^{n+1}}$. We can regard the quantifiers $\exists v_{n+1}$, $\forall v_{n+1}$ as functions
\[ \somepi, \allpi : \pow{A^{n+1}} \lrarr \pow{A^{n}} \]
\[ \begin{array}{lcl}
\somepi(S) & = & \{ s \in A^{n} \mid \exists a \in A. \, s[v_{n+1} \mapsto a] \in S \} \\
\allpi(S) & = & \{ s \in A^{n} \mid \forall a \in A. \, s[v_{n+1} \mapsto a] \in S \}
\end{array}
\]
For any $m$, we define $\HH(A^{m}) = \LL(\pow{A^{m}})$. Thus $\HH(A^{m})$ is the set of downwards closed sets of teams on the variables $\{ v_{1}, \ldots , v_{m} \} $. This provides the corresponding ``space'' of  semantic values for formulas in the Hodges semantics.
The interpretation of quantifiers in that semantics is given by the following set operators:
\[ \someH, \allH : \HH(A^{n+1}) \lrarr \HH(A^{n}) \]
\[ \begin{array}{lcl}
\someH(\UU) & = & \{ T \in \pow{A^{n}} \mid \exists f : T \rarr A. \, T[v_{n+1}  \mapsto f] \in \UU \} \\
\allH(\UU) & = & \{ T \in \pow{A^{n}} \mid T[v_{n+1}  \mapsto A] \in \UU \}
\end{array}
\]
\noindent We extend the definition of $\LL$ to act on functions\endnote{More precisely, homomorphisms of the appropriate kind. The reader familiar with category theory will see that we are really specifying the functorial action of $\LL$ in a particular case.} $h : \pow{Y} \lrarr \pow{X}$:
\[ \LL(h) : \HH(Y) \lrarr \HH(X) :: \UU \mapsto  \dset \{ h(T) \mid T \in \UU \} \, . \]
In the case that $h = f^{-1}$, where $f : X \lrarr Y$, we write $\LL(h) = \HH(f)$.

\begin{proposition}
\label{lqprop}
The Hodges quantifiers are the image under $\LL$ of the Tarski quantifiers:
\[ \someH = \LL(\somepi), \qquad  \allH = \LL(\allpi) \, . \]
\end{proposition}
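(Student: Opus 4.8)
The plan is to prove the two asserted equalities of maps $\HH(A^{n+1}) \lrarr \HH(A^{n})$ pointwise. Fixing a downwards-closed set of teams $\UU \in \HH(A^{n+1})$, I would show that $\someH(\UU)$ and $\LL(\somepi)(\UU)$ denote the same set of teams, and likewise $\allH(\UU) = \LL(\allpi)(\UU)$, by establishing both inclusions in each case. A useful preliminary is that $\someH(\UU)$ and $\allH(\UU)$ are themselves downwards closed: if $f : T' \lrarr A$ witnesses $T' \in \someH(\UU)$ and $S' \subseteq T'$, then $S'[v_{n+1} \mapsto f|_{S'}] \subseteq T'[v_{n+1} \mapsto f]$, and in the universal case $S'[v_{n+1} \mapsto A] \subseteq T'[v_{n+1} \mapsto A]$, so downwards closure of $\UU$ transmits membership to the subteam. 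Since $\LL(\somepi)(\UU) = \dset\{\somepi(T) \mid T \in \UU\}$ is by definition the least downwards-closed set containing all the projections $\somepi(T)$, the inclusion $\LL(\somepi)(\UU) \subseteq \someH(\UU)$ reduces, by this preliminary, to checking that each generator $\somepi(T)$ already lies in $\someH(\UU)$.

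For the existential quantifier I would argue as follows. Given $T \in \UU$, for each $s \in \somepi(T)$ choose (by the axiom of choice) a value $f(s) \in A$ with $s[v_{n+1} \mapsto f(s)] \in T$; this is possible exactly because $s \in \somepi(T)$. Then $\somepi(T)[v_{n+1} \mapsto f] \subseteq T$, so it belongs to $\UU$ by downwards closure, witnessing $\somepi(T) \in \someH(\UU)$; with the preliminary this yields $\LL(\somepi)(\UU) \subseteq \someH(\UU)$. Conversely, if $T' \in \someH(\UU)$ with witness $f : T' \lrarr A$, set $T = T'[v_{n+1} \mapsto f] \in \UU$. The crucial identity is $\somepi(T) = T'$: since $T$ is the graph of $f$ over $T'$, projecting away $v_{n+1}$ recovers exactly the domain $T'$. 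Hence $T' = \somepi(T) \in \LL(\somepi)(\UU)$, giving the reverse inclusion and so $\someH = \LL(\somepi)$.

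The universal case is entirely analogous, with the full cylinder $T'[v_{n+1} \mapsto A]$ replacing the graph. For $T \in \UU$, every $s \in \allpi(T)$ has all of its $v_{n+1}$-variants in $T$, so $\allpi(T)[v_{n+1} \mapsto A] \subseteq T \in \UU$ and thus $\allpi(T) \in \allH(\UU)$; downwards closure of $\allH(\UU)$ then gives $\LL(\allpi)(\UU) \subseteq \allH(\UU)$. For the reverse, given $T' \in \allH(\UU)$ put $T = T'[v_{n+1} \mapsto A] \in \UU$ and verify $\allpi(T) = T'$, using that an assignment has all its $v_{n+1}$-variants in the full cylinder iff it already lies in $T'$.

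I expect the real work to be bookkeeping rather than depth: getting the two projection identities $\somepi(T'[v_{n+1} \mapsto f]) = T'$ and $\allpi(T'[v_{n+1} \mapsto A]) = T'$ exactly right, and keeping straight which map $\LL$ is applied to, namely that $\somepi : \pow{A^{n+1}} \lrarr \pow{A^{n}}$ induces $\LL(\somepi) : \HH(A^{n+1}) \lrarr \HH(A^{n})$ via downwards-closed direct images. The one genuine appeal to a principle is the use of choice to assemble the witness $f$ in the existential ``$\LL(\somepi) \subseteq \someH$'' direction; this is precisely where the team-semantic definition of $\exists$ via choice functions comes from, so it is reassuring that it emerges from the general adjoint construction rather than being imposed.
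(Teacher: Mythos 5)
Your proposal is correct and follows essentially the same route as the paper: show each generator $\somepi(T)$ (resp.\ $\allpi(T)$) for $T \in \UU$ lies in $\someH(\UU)$ (resp.\ $\allH(\UU)$) using the axiom of choice and downwards closure of $\UU$, and obtain the converse inclusions from the identities $\somepi(T'[v_{n+1} \mapsto f]) = T'$ and $T' \subseteq \allpi(T'[v_{n+1} \mapsto A])$. Your explicit preliminary observation that $\someH(\UU)$ and $\allH(\UU)$ are themselves downwards closed is a small point the paper leaves implicit, and it is needed to pass from the generators to their down-closure, so it is a welcome addition rather than a deviation.
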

\begin{proof}
Firstly, we show that $ \LL(\somepi)(\UU) \subseteq \someH(\UU)$ for all $\UU \in \HH(A^{n+1})$.
Suppose that $T \in \UU$. Let $T'  = \somepi(T)$. This means that
\[ \forall t \in T'. \, \exists a \in A. \, t[v_{n+1} \mapsto a] \in T \, . \]
Using the axiom of choice, there exists a function $f : T' \lrarr A$ such that
\[ T'[v_{n+1} \mapsto f] \subseteq T \in \UU \, . \]
Since $\UU$ is downwards closed, this implies that $T' \in \someH(\UU)$, as required.

\noindent The converse follows immediately from the fact that 
\[ \somepi(T[v_{n+1} \mapsto f]) = T \, . \]
Next we show that $ \LL(\allpi)(\UU) \subseteq \allH(\UU)$. Since 
\[ (\allpi(T))[v_{n+1} \mapsto A] \subseteq T \, , \]
if $T \in \UU$, then $\allpi(T) \in \allH(\UU)$ by downwards closure. The converse follows similarly from $T \subseteq \allpi(T[v_{n+1} \mapsto A])$.
\end{proof}

\begin{proposition}
\label{qadjprop}
The Hodges quantifiers are adjoints to substitution:
\begin{enumerate}
\item $\someH$ is left adjoint to $\HH(\pi)$:
\[ \someH(\UU) \subseteq \VVV \;\; \Longleftrightarrow \;\; \UU \subseteq \HH(\pi)(\VVV) \, . \]
\item $\allH$ is right adjoint to $\HH(\pi)$:
\[ \HH(\pi)(\VVV) \subseteq \UU \;\; \Longleftrightarrow \;\; \VVV \subseteq \allH(\UU) \, .\]
\end{enumerate}
\end{proposition}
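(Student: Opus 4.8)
The plan is to derive both adjunctions from a single abstract fact, namely that the functor $\LL$ preserves adjoints, together with Proposition~\ref{lqprop}. Recall first the standard Lawvere situation at the Tarski level: the projection substitution $\pi^{-1} : \pow{A^{n}} \lrarr \pow{A^{n+1}}$ sits in a chain of adjunctions in which $\somepi$ is its left adjoint and $\allpi$ its right adjoint. Concretely, for all $S \subseteq A^{n+1}$ and $T \subseteq A^{n}$ we have $\somepi(S) \subseteq T \Longleftrightarrow S \subseteq \pi^{-1}(T)$ and $\pi^{-1}(T) \subseteq S \Longleftrightarrow T \subseteq \allpi(S)$. This is exactly the content of the explicit adjunction conditions recalled in the previous subsection, specialized to $f = \pi$. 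So the two statements to be proved are precisely the images of these two adjunctions under $\LL$, once we know that $\someH = \LL(\somepi)$, $\allH = \LL(\allpi)$, and $\HH(\pi) = \LL(\pi^{-1})$.

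Next I would establish the two structural properties of $\LL$ that make ``preservation of adjoints'' legitimate. First, $\LL$ is a covariant functor on the posets $\pow{A^{m}}$ and their monotone maps: $\LL(\mathsf{id}) = \mathsf{id}$ is immediate, and $\LL(h \circ k) = \LL(h) \circ \LL(k)$ holds because, for monotone $h$, the downward closure absorbs an intermediate closure, $\dset\{ h(S) \mid S \in \dset W \} = \dset\{ h(T) \mid T \in W \}$ (if $S \subseteq S'$ with $S' \in W$ then $h(S) \subseteq h(S')$). Second, $\LL$ is locally monotone: if $h(T) \subseteq h'(T)$ for all $T$ then $\LL(h)(\UU) = \dset\{ h(T) \mid T \in \UU \} \subseteq \dset\{ h'(T) \mid T \in \UU \} = \LL(h')(\UU)$, directly from the definition. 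The maps $\pi^{-1}$, $\somepi$, $\allpi$ are all monotone, so both properties apply to them.

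With this in hand I would invoke the general principle: an adjunction $L \dashv R$ between posets is equivalent to the unit and counit inequalities $\mathsf{id} \leqslant R \circ L$ and $L \circ R \leqslant \mathsf{id}$. Applying the locally monotone functor $\LL$ and using functoriality turns these into $\mathsf{id} \leqslant \LL(R) \circ \LL(L)$ and $\LL(L) \circ \LL(R) \leqslant \mathsf{id}$, which is exactly $\LL(L) \dashv \LL(R)$. Instantiating $L = \somepi$, $R = \pi^{-1}$ gives $\LL(\somepi) \dashv \LL(\pi^{-1})$, and $L = \pi^{-1}$, $R = \allpi$ gives $\LL(\pi^{-1}) \dashv \LL(\allpi)$. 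By Proposition~\ref{lqprop} we have $\LL(\somepi) = \someH$ and $\LL(\allpi) = \allH$, and by definition $\LL(\pi^{-1}) = \HH(\pi)$; substituting yields $\someH(\UU) \subseteq \VVV \Longleftrightarrow \UU \subseteq \HH(\pi)(\VVV)$ and $\HH(\pi)(\VVV) \subseteq \UU \Longleftrightarrow \VVV \subseteq \allH(\UU)$, which are the two parts of the claim.

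The main obstacle is the bookkeeping of the second paragraph: confirming that $\LL$ is genuinely functorial on this subcategory of monotone maps, in particular that direct image interacts correctly with downward closure for $\pi^{-1}$, $\somepi$, and $\allpi$. Everything here goes through because these maps are monotone, but it is the step that must be verified with care rather than asserted. If one prefers to sidestep the abstract machinery, the identical result can be obtained by transporting the Tarski unit and counit through $\LL$ by hand --- deriving $\UU \subseteq \HH(\pi)(\someH(\UU))$ from $S \subseteq \pi^{-1}(\somepi(S))$ and $\someH(\HH(\pi)(\VVV)) \subseteq \VVV$ from $\somepi(\pi^{-1}(T)) \subseteq T$, each time using downward closure --- but this is merely the general principle re-enacted in the present special case.
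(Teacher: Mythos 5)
Your proof is correct and takes essentially the same route as the paper's: both establish that $\LL$ is an order-enriched (locally monotone) functor, deduce that it preserves adjunctions via the unit/counit inequalities, and then combine this with Proposition~\ref{lqprop} and the Tarski-level adjunctions $\somepi \dashv \pi^{-1} \dashv \allpi$. The only difference is that you spell out the verification of functoriality of $\LL$ (absorption of the intermediate downward closure), which the paper asserts without proof.
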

\begin{proof}
It is straightforward to verify the adjunction conditions directly. We give a more conceptual argument.
There is a natural pointwise ordering on monotone functions between partially ordered sets, $h, k : P \lrarr Q$:
\[ h \leqslant k \;\; \equiv \;\; \forall x \in P. \, h(x) \leqslant k(x) \, . \]
$\LL$ is an \emph{order-enriched functor} with respect to this ordering. Functoriality means that
\[ \LL(h \circ g) = \LL(h) \circ \LL(g), \qquad \LL(\id{M}) = \id{\LL(M)} \, , \]
while order-enrichment means that
\[ h \leqslant k \;\; \Rightarrow \;\; \LL(h) \leqslant \LL(k) \, . \]
These properties imply that $\LL$ automatically preserves adjointness. That is, if we are given monotone maps
\[ f : P \lrarr Q, \qquad g : Q \lrarr P \]
such that $\id{P} \leqslant g \circ f$ and $f \circ g \leqslant \id{Q}$, \ie so that $f$ is left adjoint to $g$, then
\[ \id{\LL(P)} = \LL(\id{P}) \leqslant \LL(g \circ f) = \LL(g) \circ \LL(f) \, , \]
and similarly $\LL(f) \circ \LL(g) \leqslant \id{\LL(Q)}$, so $\LL(f)$ is left adjoint to $\LL(g)$ (and of course $\LL(g)$ is right adjoint to $\LL(f)$). Combining this with Proposition~\ref{lqprop} yields the required result.
\end{proof}

\subsection{The dependence-friendly quantifiers}
We shall also give characterizations of the dependence-guarded quantifiers as certain adjoints: this will be our first use of the intuitionistic implication.

We recall the definition of the dependence-friendly existential quantifier:
\[ (\exists x \setminus x_{1}, \ldots , x_{n}). \, \phi \;\; \equiv \;\; \exists x. (D(x_{1}, \ldots , x_{n}, x) \; \wedge \; \phi) \, . \]
There has not been a comparably natural notion of dependence-friendly universal quantification. According to our analysis, this is because the appropriate connective needed to express the right notion, namely intuitionistic implication, has not been available. Using it, we can define such a quantifier:
\[ (\forall x \setminus x_{1}, \ldots , x_{n}). \, \phi \;\; \equiv \;\; \forall x. (D(x_{1}, \ldots , x_{n}, x) \; \rarr \; \phi) \, . \]
As evidence for the naturalness of these quantifiers, we shall express them both as adjoints.

Firstly, we recall that intuitionistic conjunction and implication are related by another fundamental adjointness \cite{Law69}:
\begin{equation}
\label{impadjeq}
\UU \cap \VVV \subseteq \WW \;\; \Longleftrightarrow \;\; \UU \subseteq \VVV \rarr \WW \, .
\end{equation}
This can be expressed as a bidirectional inference rule:
\[ \infer={\phi \vdash \psi \rarr \theta}{\phi \; \wedge \; \psi \vdash \theta} \, . \]
Next, we extend our semantic notation to the dependence-friendly quantifiers. Given $W \subseteq \{ v_{1}, \ldots , v_{n} \}$, we define $D_{W} \in \HH(A^{n+1})$:
\[ D_{W} = \{ T \mid \forall s, t \in T. \, s \eqW t \; \Rightarrow \; s(v_{n+1}) = t(v_{n+1}) \} \, . \]
Now we can define the semantic operators corresponding to the dependence-friendly quantifiers:
\[ \someW, \allW : \HH(A^{n+1}) \lrarr \HH(A^{n}) \]
\[ \begin{array}{lcl}
\someW(\UU) & = & \someH(D_{W} \cap \UU) \\
\allW(\UU) & = & \allH(D_{W} \rarr \UU)
\end{array}
\]
\begin{proposition}
The dependence-friendly existential $\someW$ is left adjoint to the following operation:
\[ \VVV  \; \mapsto \; (D_{W} \rarr \HH(\pi)(\VVV)) \, . \]
The dependence-friendly universal $\allW$ is right adjoint to the following operation:
\[  \VVV \; \mapsto \; (D_{W} \cap \HH(\pi)(\VVV)) \, . \]
\end{proposition}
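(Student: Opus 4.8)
The plan is to observe that both dependence-friendly quantifiers are, by their very definitions, composites of operations each of which is one half of an adjunction already established, and then to invoke the fact that adjunctions compose. The two adjunctions I will draw on are the quantifier adjunctions of Proposition~\ref{qadjprop} (namely $\someH \dashv \HH(\pi)$ and $\HH(\pi) \dashv \allH$) and the intuitionistic conjunction--implication adjunction (\ref{impadjeq}), which for a fixed $D_{W}$ says that $\UU \mapsto D_{W} \cap \UU$ is left adjoint to $\VVV \mapsto D_{W} \rarr \VVV$ on $\HH(A^{n+1})$.

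For the existential, I would chain the two adjunctions directly. Starting from $\someW(\UU) \subseteq \VVV$ and unfolding the definition $\someW(\UU) = \someH(D_{W} \cap \UU)$, I first transpose across $\someH \dashv \HH(\pi)$ using Proposition~\ref{qadjprop}(1), obtaining $D_{W} \cap \UU \subseteq \HH(\pi)(\VVV)$; then I transpose across the conjunction--implication adjunction using (\ref{impadjeq}) (and commutativity of $\cap$), obtaining $\UU \subseteq (D_{W} \rarr \HH(\pi)(\VVV))$. Reading off the resulting chain of equivalences is exactly the statement that $\someW$ is left adjoint to $\VVV \mapsto (D_{W} \rarr \HH(\pi)(\VVV))$.

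For the universal, the argument is dual. Starting from $(D_{W} \cap \HH(\pi)(\VVV)) \subseteq \UU$, I apply (\ref{impadjeq}) to get $\HH(\pi)(\VVV) \subseteq (D_{W} \rarr \UU)$, and then transpose across $\HH(\pi) \dashv \allH$ via Proposition~\ref{qadjprop}(2) to get $\VVV \subseteq \allH(D_{W} \rarr \UU) = \allW(\UU)$. This is precisely the claim that $\allW$ is right adjoint to $\VVV \mapsto (D_{W} \cap \HH(\pi)(\VVV))$. Abstractly, both facts are instances of the single observation that a composite of left adjoints is left adjoint to the corresponding composite of right adjoints, taken in the reverse order.

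There is very little genuine difficulty here; the calculation is a two-step transposition in each case. The only points requiring care --- and what I would treat as the main obstacle --- are bookkeeping ones: checking that the types line up so the composites are well-formed (all of $\someH$, $\allH$, $\HH(\pi)$ and the Heyting operations act between the appropriate spaces $\HH(A^{n})$ and $\HH(A^{n+1})$), that every operation preserves downward closure so that $\someW$ and $\allW$ genuinely land in $\HH(A^{n})$, and above all that the direction of each adjunction and the order of composition are matched correctly, since it is easy to transpose on the wrong side or compose in the wrong order. Once Proposition~\ref{qadjprop} and (\ref{impadjeq}) are in hand, no further computation specific to $D_{W}$ is needed.
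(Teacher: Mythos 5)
Your proposal is correct and follows exactly the route the paper takes: composing the quantifier adjunctions of Proposition~\ref{qadjprop} with the conjunction--implication adjunction~(\ref{impadjeq}), using the fact that adjoints compose. Your version merely spells out the two transposition steps that the paper leaves implicit.
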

\begin{proof}
A direct verification is straightforward, but it suffices to observe that adjoints compose, and then to use Proposition~\ref{qadjprop} and the adjointness~(\ref{impadjeq}).
\end{proof}

Of course, the analysis we have given in this sub-section  applies to any guarded quantifiers; the dependence predicates play no special r\^ole here. The point is to show how the intuitionistic connectives round out the logic in a natural fashion. We shall apply them to a finer analysis of dependence itself in section~\ref{depsec}.

\section{Full Abstraction}
We shall now prove a full abstraction result in the sense of Hodges \cite{Hod97a}.\endnote{As Hodges notes, he himself takes the term, and the concept, from Computer Science \cite{Mil77,Plo77}.}
The point of this is to show that, even if we take sentences and their truth-values as primary,
the information contained in the semantics of formulas in general  is not redundant, since whenever two formulas receive different denotations, they make different contributions overall to the truth-values assigned to sentences.

The fact that such a result holds for $\Bm$-logic is notable, in that the logic is highly non-classical, while the semantics of sentences is bivalent. For \BID-logic, the set of possible truth values for open formulas is huge even in finite models \cite{CH01}, while the semantics of sentences is trivalent.

While our argument follows that of Hodges \cite{Hod97a}, we find a  natural r\^ole for the intuitionistic implication, and can give a very simple proof, while Hodges' argument goes through the correspondence with the game-theoretical semantics.

To formalize  full abstraction, we introduce the notion of a \emph{sentential context} with respect to a set of variables $X$. This is a formula with an occurrence of a ``hole'' $[\cdot]$ such that inserting a formula with free variables in $X$ into the hole yields a sentence.
Now consider two formulas $\phi$ and $\psi$ of \BID-logic, with free variables in $X$. We say that the formulas are \emph{semantically equivalent} if they have the same denotations, \ie the same sets of satisfying teams, in all interpretations with respect to all structures. We say that $\phi$ and $\psi$ are \emph{observationally equivalent} if for all sentential contexts $C[\cdot]$ for $X$, $C[\phi]$ and $C[\psi]$ are assigned  the same truth values in all interpretations. The fact that semantic equivalence implies observational equivalence follows immediately from the compositional form of the semantics. The converse is \emph{full abstraction}.\endnote{While this notion is perfectly consistent with usage in Computer Science, one very important tensioning ingredient in the programming language context is missing, namely correspondence with an independently defined operational semantics \cite{Mil77,Plo77}.}

\begin{proposition}
The team semantics is fully abstract for any sublanguage of \BID-logic containing universal quantification and intuitionistic implication.
\end{proposition}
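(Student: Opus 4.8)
The plan is to prove the contrapositive of the non-trivial direction. The easy half of full abstraction (semantic equivalence $\Rightarrow$ observational equivalence) is immediate from compositionality, so what remains is: if $\phi$ and $\psi$ are \emph{not} semantically equivalent, then they are \emph{not} observationally equivalent. Assume then that $\phi$ and $\psi$ have free variables in $X = \{v_1, \ldots, v_n\}$ and that, for some structure $\MM$ with domain $A$, there is a team $T$ on $X$ with $T \models \phi$ but $T \not\models \psi$ (the two formulas playing symmetric roles). My goal is to build a sentential context that sees this difference.

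The core idea is to find, for this particular $T$, a formula $\theta_T$ whose satisfying teams are exactly the subteams of $T$, i.e.\ with denotation $\dset\{T\}$. Granting such a $\theta_T$, I would take the context
\[ C[\,\cdot\,] \;=\; \forall v_1. \cdots \forall v_n. \, (\theta_T \rarr [\,\cdot\,]) \, , \]
which uses exactly the two connectives named in the statement, and compute $C[\chi]$ for arbitrary $\chi$ with free variables in $X$. Unwinding the universal-quantifier clause $n$ times carries the singleton team $\{\langle\rangle\}$ (at which sentences are evaluated) to the \emph{full} team $A^X$ of all assignments on $X$, so $C[\chi]$ is true iff $A^X \models \theta_T \rarr \chi$. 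The clause for intuitionistic implication then gives $\forall U \subseteq A^X.\,(U \models \theta_T \Rightarrow U \models \chi)$; since the $U$ satisfying $\theta_T$ are precisely the subteams of $T$, and denotations are downwards closed, this is equivalent to $T \models \chi$. Hence $C[\chi]$ is true exactly when $T \models \chi$, so $C[\phi]$ is true while $C[\psi]$ is false, and the context separates the two formulas.

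The step requiring genuine care — and the main obstacle — is producing $\theta_T$ defining $\dset\{T\}$ for an \emph{arbitrary} team $T$, since $T$ need not be definable over the original signature. The clean way around this is simply to name $T$: expand $\MM$ by a fresh $n$-ary relation symbol $R$ interpreted as $T$ (viewed as a subset of $A^n$) and put $\theta_T = R(v_1, \ldots, v_n)$. Because this is a literal, $U \models \theta_T$ holds iff every $t \in U$ has $(t(v_1), \ldots, t(v_n)) \in T$, that is iff $U \subseteq T$; in particular $T \models \theta_T$, which is what the computation above needs. As $\phi$ and $\psi$ do not mention $R$, their denotations, and the crucial inequality $T \models \phi$, $T \not\models \psi$, are untouched by the expansion, so the separation survives. (If one wishes to remain in a fixed signature, the argument runs verbatim for any $T$ already definable by a conjunction of literals; the predicate-naming device is just the uniform way to guarantee a defining $\theta_T$.)

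I would close by noting why both connectives are indispensable, which is precisely the point the authors wish to highlight: the universal quantifiers serve both to close $\phi$ into a sentence and to generate the maximal team $A^X$, while the intuitionistic implication is exactly the tool that lets the context probe an arbitrary subteam $T$ of $A^X$ rather than only $A^X$ itself. The remaining checks — that iterating $\forall$ yields the full team $A^X$, and that $\forall U \subseteq T.\,U \models \chi$ collapses to $T \models \chi$ by downward closure — are routine from the semantic clauses already recorded, and I would not belabour them.
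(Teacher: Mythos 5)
Your proof is correct and is essentially the paper's own argument: both extend the signature with a fresh relation symbol $R$ naming the separating team $T$ (so that the teams satisfying $R(v_1,\ldots,v_n)$ are exactly the subteams of $T$), and both use the context $\forall v_1 \cdots \forall v_n.\,(R(v_1,\ldots,v_n) \rarr [\cdot])$ together with downward closure to conclude that the context is true on $\phi$ and false on $\psi$. Your unwinding of the quantifier clauses to reach the full team $A^X$ is just a more explicit rendering of the same computation the paper leaves implicit.
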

\begin{proof}
Suppose that $\lsem\phi \rsem \setminus \lsem \psi \rsem$ in some interpretation contains a team $T$. Extend the language with a  relation symbol $R$, and the interpretation by assigning ${\downarrow}(T)$ to $R$. Then use the context
\[ C[\cdot] \; \equiv \; \forall v_{1}, \ldots , \forall v_{n}. \, (R(v_{1}, \ldots , v_{n}) \rarr [\cdot]) \]
where the free variables in $\phi$ and $\psi$ are contained in $\{ v_{1}, \ldots , v_{n} \}$.
Then $C[\phi]$ is true (satisfied by the empty tuple), since for every team $T'$ satisfying $R(v_{1}, \ldots , v_{n})$, $T' \subseteq T$, and hence by assumption and downwards closure, $T'$ satisfies $\phi$. This means that all teams  over  $\{ v_{1}, \ldots , v_{n} \}$ satisfy the implication $R(v_{1}, \ldots , v_{n}) \rarr \phi$, and hence $\langle \rangle $ satisfies $C[\phi]$. On the other hand, $C[\psi]$ is not satisfied by the empty tuple, since $T$ satisfies $R(v_{1}, \ldots , v_{n})$, while $T$ does not satisfy $\psi$ by assumption.
\end{proof}
Note that the use of the intuitionistic implication in relativizing to those teams satisfying the precondition $R(v_{1}, \ldots , v_{n})$ is exactly what is needed.

\section{Analyzing Dependence}
\label{depsec}

We now turn to the dependence predicate itself. Since it encapsulates the ``jump'' from  first-order to second-order semantics, we cannot be too hopeful about taming it axiomatically\endnote{See \cite{MR1867954} for details on this.}. But it turns out that we can give a finer analysis in \BID-logic.

Consider the following ``trivial'' case of dependence:
\[ C(v) \; \equiv \; D(\vn, v) \, . \]
This expresses that $v$ depends on nothing at all, and hence has a fixed value --- functional dependency for the constant function. Semantically, this is the following simple special case of the semantics of dependence:
\[ T \modX C(v) \; \; \equiv \;\; \forall t_{1}, t_{2} \in T. \, t_{1}(v) = t_{2}(v) \, . \]
Using the intuitionistic implication, we can \emph{define} the general dependence predicate from this special case:
\begin{equation}
\label{depeq}
D(W, v) := \left(\bigwedge_{w \in W} C(w) \right) \; \rarr \; C(v) 
\end{equation}

\begin{proposition}
\label{depcprop}
The definition of $D$ from $C$ is semantically equivalent to the definition given previously:
\[ T \modX D(W, v) \;\; \equiv \;\; \forall s, t \in T. \, s \eqW t \; \Rightarrow \; s(v) = t(v) .) \]
\end{proposition}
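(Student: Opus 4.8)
The plan is to unfold the team-semantics clauses for intuitionistic implication and finite conjunction on the left-hand side of~(\ref{depeq}), and then to verify the two directions of the equivalence directly. Writing out the definition, $T \modX D(W,v)$ expands to
\[ \forall U \subseteq T. \; \big[\, U \modX \textstyle\bigwedge_{w \in W} C(w) \;\Rightarrow\; U \modX C(v) \,\big]. \]
The key preliminary observation is that the antecedent holds for a subteam $U$ exactly when $U$ is $\eqW$-homogeneous: by the clause for $\wedge$ and the semantics of $C$, we have $U \modX \bigwedge_{w \in W} C(w)$ iff $U \modX C(w)$ for every $w \in W$, iff for all $u_{1}, u_{2} \in U$ and all $w \in W$ we have $u_{1}(w) = u_{2}(w)$, \ie iff $u_{1} \eqW u_{2}$ for all $u_{1}, u_{2} \in U$. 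Likewise the consequent $U \modX C(v)$ says precisely that all members of $U$ agree on $v$.

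For the direction from the original definition to~(\ref{depeq}), I would assume $\forall s, t \in T.\, s \eqW t \Rightarrow s(v) = t(v)$ and take an arbitrary $U \subseteq T$ satisfying the antecedent. Any two $u_{1}, u_{2} \in U$ then satisfy $u_{1} \eqW u_{2}$ by the observation above, so $u_{1}(v) = u_{2}(v)$ by hypothesis; hence $U \modX C(v)$, which is what the implication requires.

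For the converse, I would use a two-element subteam as a probe. Given $s, t \in T$ with $s \eqW t$, set $U = \{ s, t \} \subseteq T$. Since $s$ and $t$ agree on all of $W$, the subteam $U$ satisfies the antecedent $\bigwedge_{w \in W} C(w)$, so the assumed implication yields $U \modX C(v)$, which gives exactly $s(v) = t(v)$ (the argument also covers $s = t$, where $U$ is a singleton and the conclusion is trivial).

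The only place any choice is involved — and hence the one step to get right — is selecting the witnessing subteam $U = \{ s, t \}$ in the converse: the universal quantification over all subteams built into the intuitionistic implication is precisely what allows a purely ``local'' two-point test to recover the full functional-dependence condition, while downwards closure guarantees that $U \subseteq T$ is again a legitimate team to test against. Everything else is a routine unwinding of the satisfaction clauses.
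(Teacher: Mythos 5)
Your proof is correct and takes essentially the same route as the paper, whose proof is only the remark that the result follows by unwinding the definitions, with the key observation being exactly your preliminary one: the antecedent $\bigwedge_{w \in W} C(w)$ picks out precisely the subteams lying within a single $\eqW$-class. Your two-element probe $U = \{s,t\}$ is the natural way to make the converse direction explicit (the only quibble is that $U$ being a legitimate team needs no appeal to downwards closure --- any set of assignments is a team).
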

\begin{proof}
This is just an exercise in unwinding the definitions. Note that the intuitionistic implication lets us range over all subsets of the team which are in a single equivalence class under $\eqW$, and require that $v$ is constant on those subsets.
\end{proof}

\subsection{Armstrong Axioms}
The current stock of plausible axioms for the dependence predicates is limited to the
\emph{Armstrong axioms} from database theory \cite{Arm}. These are a standard complete set of axioms for functional   dependence. They can be given as follows.

\begin{center}
\begin{tabular}{ll}
(1)
%Jouko changed layout
 & Always $D(x,x)$. \\
(2) & If $D(x,y,z)$, then $D(y,x,z)$. \\
(3) & If $D(x,x,y)$, then $D(x,y)$. \\
(4) & If $D(x,z)$, then $D(x,y,z)$. \\
(5) & If $D(x,y)$ and $D(y,z)$, then $D(x,z)$. \\
\end{tabular}
\end{center}

\noindent However, in the light of our analysis, the Armstrong axioms simply fall out as standard properties of  implication and conjunction.\endnote{Formal connections between the Armstrong axioms and  propositional logic  were made by Fagin \cite{Fag}. He only considered Horn clauses, so the distinction between intuitionistic and classical logic was not apparent. Nevertheless, the passage to two-element subsets in the ``Semantic proof of the Equivalence Theorem'' in \cite{Fag} implicitly involves similar reasoning to Proposition~\ref{depcprop}.}
If we  set $p = C(x)$, $q = C(y)$, $r = C(z)$, and use (\ref{depeq}) to translate the Armstrong axioms into purely implicational form, we see that they  correspond to the following:

\begin{center}
\begin{tabular}{ll}
(1)
%Jouko changed layout
 & $p \rarr p$. \\
(2) & $(p \rarr q \rarr r) \rarr (q \rarr p \rarr r)$. \\
(3) & $(p \rarr p \rarr q) \rarr (p \rarr q)$. \\
(4) & $(p \rarr r) \rarr (p \rarr q \rarr r)$. \\
(5) & $(p \rarr q) \rarr (q \rarr r) \rarr (p \rarr r)$. \\
\end{tabular}
\end{center}
These are the well-known axioms $\textbf{I, C, W, K, B}$ respectively\endnote{Axiom (4) as given generalizes the standard \textbf{K} axiom  $p \rarr q \rarr p$, which is obviously derivable from (1) and (4) by substitution and Modus Ponens.}  \cite{CF58}  --- which form a complete axiomatization of \emph{intuitionistic} (but not classical!) implication.\endnote{Under the (Curry part of the) Curry-Howard correspondence, they correspond to a well-known functionally complete set of \emph{combinators} \cite{CF58}.}
A standard example of a classically valid implicational formula which is \emph{not} derivable from these axioms is \emph{Peirce's law}: $((p \rarr q) \rarr p) \rarr p$.

Thus we have reduced the understanding of the dependence predicate to understanding of the, \textit{prima facie}~simpler, constancy predicate $C$.

\section{Further Directions}
In this final section, we shall sketch a number of further directions.
Detailed accounts are under development, and  will appear elsewhere.

\subsection{Completeness}
Predicate BI-logic is a well developed formalism, with a proof theory which is sound and complete relative to an algebraic semantics \cite{Pym02}. Since \BID-logic is a special case, we have a sound ambient inference system. Of course this is not complete for the intended semantics for \BID-logic  --- and cannot be.
We may hope to obtain completeness for some smaller class of models, possibly on the lines of the Henkin completeness theorem for higher-order logic \cite{Hen50}.

\subsection{Diagrams}
Now fix a particular interpretation in a structure $\MM$  with universe $A$. Consider the following construction. We  introduce constants for each $a \in A$, the usual first order diagram (all true atomic sentences), and the following  infinitary axiom:
\[ \forall v. \, \bigotimes_{a \in A} (v=a) \, . \]
We can \emph{define} the predicate $C$ (and hence dependence $D$) by the following infinitary formula:
\[ C(v) := \bigvee_{a \in A} (v=a) \, . \]
Note how the two different connectives (one additive, the other multiplicative) feature naturally.

This gives a logical (albeit infinitary) characterization of dependence.

\subsection{Representation}
We can also consider representation theory for the structures $\HH(X) = \LL(\Pow(X))$. We seek lattice-theoretic properties of these structures which suffice to characterize them.

Firstly, we note that the down-closures of single teams are exactly the \emph{complete join-primes} of the lattice:
\[ a \leqslant \bigvee_{i} b_{i} \;\; \Rightarrow \;\; \exists i. \, a \leqslant b_{i} . \]
Moreover, these join-primes order generate, \ie every element is the join of the join-primes below it.
All of this structure is in terms of the intuitionistic disjunction.

Next, we note that the join-primes are closed under $\otimes$, which is  moreover idempotent on the join-primes, endowing them with the structure of a semilattice.
This is very different to the semilattice structure given by intuitionistic disjunction: e.g.
\[ \dset(T_{1}) \vee \dset(T_{2}) = \dset(\{T_{1}, T_{2}\}) \neq  \dset(T_{1} \cup T_{2}) = \dset(T_{1}) \otimes \dset(T_{2}) \, . \]
The \emph{double singletons} are exactly the complete atoms in this semilattice, which is complete atomic in the usual sense.

Syntactically, assuming names for elements, we can describe these atomic join-primes in the lattice of propositions over variables $v_{1}, \ldots , v_{n}$ as
\[  (v_{1} = a_{1}) \; \wedge \; \cdots \; \wedge \;  (v_{n} = a_{n}). \]
These are of course the tuples. (Downclosures of) arbitrary teams are then described by expressions
$\bigotimes_i A_i$,  where $A_i$ ranges over such atoms. Arbitrary elements are joins (intuitionistic disjunctions) of such elements. So there is a normal form for general elements:
\[ \bigvee_{i} \bigotimes_{ij} A_{ij} \, . \]
Moreover, from the lattice-theoretic properties it is easily shown that the ordering between such normal forms agrees with the set inclusion ordering.

\subsection{Expressiveness}
\newcommand{\rel}[1]{\mbox{rel}(#1)}
One of the defining characteristics of Dependence Logic as well as IF-logic is that they can be expressed in Existential Second Order Logic, $\Sigma^1_1$, and conversely, every $\Sigma^1_1$ definable property of structures can be expressed with a sentence of Dependence Logic. Both are true even on finite structures. To see what this connection with $\Sigma^1_1$ means let us adopt the notation that if $T$ is a team on a set $X$ of variables, then $\rel{T}$ is the corresponding relation.
%In the special case that $X=\vn$, $\rel{\{\langle\rangle\}}$ is the $0$-ary relation %$\top$, the truth value of which is true, and $\rel{\vn}=\vn$ .
Hodges~\cite{Hod97b} associates with every formula $\phi$ of IF-logic (equivalently, of Dependence Logic) with free variables in the set $X=\{x_1,...,x_n\}$ an Existential Second Order sentence $\tau_\phi(R)$, with $R$ an $n$-ary predicate symbol, such that in any model $\MM$ and for any team $T$ on $X$ the following holds: \begin{equation}\label{sol}\MM,T\models_X\phi\iff (\MM,\rel{T})\models\tau_\phi(R).\end{equation} Conversely, if $\Phi$ is any Existential Second Order sentence, then there is a sentence $\phi$ of Dependence Logic such that the following holds for all models $\MM$: $$\MM\models\Phi\iff\MM,\{\langle\rangle\}\models\phi.$$
Virtually all model theoretic properties of Dependence Logic follow from this relationship with $\Sigma^1_1$, for example, the Compactness Theorem, the downward and upward L\"owenheim-Skolem Theorems, the Interpolation Theorem, and the fact that every sentence $\phi$ in Dependence Logic for which there exists a ``negation" $\psi$ such that for all $\MM$ $$\MM\models\phi\iff\MM\not\models\psi,$$ is actually first order definable\endnote{See e.g. \cite{Vaan} for details.}. Also the interesting fact that the class of properties of finite structures expressible in Dependence Logic is exactly NP follows from this. Because of these connections it is quite interesting to ask whether the extensions $\Bm$ and $\BID$ can likewise be embedded in $\Sigma^1_1$, the existential fragment of Second Order Logic.

Now the question arises which semantics one should use. To be able to compare results with Dependence Logic and IF-logic, we use the full semantics familiar from \cite{Hod97a} and \cite{Vaan}.

\begin{proposition}
There is no translation of  any extension of  Dependence Logic containing either intuitionistic implication or linear implication into existential second order $\Sigma^1_1$. The same is true on finite models, assuming NP$\ne$co-NP.
\end{proposition}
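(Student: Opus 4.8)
The plan is to show that either implication lets us express, at the level of sentences, the \emph{classical negation} of an arbitrary Dependence Logic sentence, and then to invoke the failure of $\Sigma^1_1$ to be closed under complementation. Suppose, for contradiction, that some extension in question admitted a translation in the sense of~(\ref{sol}), assigning to each formula $\phi$ a $\Sigma^1_1$ sentence $\tau_\phi(R)$ with $\MM, T \models_X \phi \IFF (\MM,\rel{T})\models\tau_\phi(R)$. The key gadget is the formula $\bot \equiv \exists x.\,(x\neq x)$, which by the quantifier and literal clauses is satisfied by a team precisely when that team is empty.

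First consider intuitionistic implication. For any Dependence Logic sentence $\psi$, set $\neg\psi \equiv \psi \rarr \bot$. Evaluating at the team $\{\langle\rangle\}$ over the empty variable context, whose only subteams are $\vn$ and $\{\langle\rangle\}$, the clause for $\rarr$ unwinds to $\{\langle\rangle\}\models\neg\psi \IFF \{\langle\rangle\}\not\models\psi$: the subteam $\vn$ satisfies both $\psi$ and $\bot$, so only the subteam $\{\langle\rangle\}$ matters, and it never satisfies $\bot$. Thus $\neg\psi$ defines exactly the complement of the class of models of $\psi$, and by~(\ref{sol}) applied to the sentence $\neg\psi$ with $T=\{\langle\rangle\}$, this complement would be $\Sigma^1_1$.

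For linear implication I would instead use the observation already recorded in the text, that the empty team satisfies $A\linimpl B$ iff every team satisfying $A$ also satisfies $B$. Taking $A=\psi$ and $B=\bot$ gives $\vn \models \psi\linimpl\bot$ iff the only team satisfying $\psi$ is $\vn$, i.e.\ iff $\{\langle\rangle\}\not\models\psi$. Reading the team property of the sentence $\psi\linimpl\bot$ at the (perfectly legitimate) empty team and applying~(\ref{sol}) with $T=\vn$ and $R$ interpreted as the empty relation, the complement of the class of models of $\psi$ would once more be forced to be $\Sigma^1_1$.

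Finally I would instantiate $\psi$ to reach the contradiction. Since every $\Sigma^1_1$ sentence is expressible in Dependence Logic, and since $\Sigma^1_1 \neq \Pi^1_1$, choose $\psi$ to define a class in $\Sigma^1_1 \setminus \Pi^1_1$ --- for instance the existence of a nonempty subset with no $<$-minimal element (non-well-foundedness), whose complement, well-foundedness, is $\Pi^1_1$ but not $\Sigma^1_1$. Both constructions above then exhibit a non-$\Sigma^1_1$ class as $\Sigma^1_1$, which is absurd. On finite models the identical argument applies with $\Sigma^1_1$ equal to NP and $\psi$ chosen to define an NP-complete property: its complement is co-NP-complete and would be forced into NP, collapsing NP${}={}$co-NP against the hypothesis. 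The main obstacle --- really the only nonroutine point --- is arranging for the implication to realize genuine Boolean negation despite the trivalent, downwards-closed semantics; this is why one must evaluate at exactly the right minimal team ($\{\langle\rangle\}$ for $\rarr$, $\vn$ for $\linimpl$), so that~(\ref{sol}) still applies there while the single relevant subteam flips the truth value.
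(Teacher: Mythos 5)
Your proof is correct and follows essentially the same route as the paper's: both hinge on a formula $\bot$ satisfied only by the empty team, so that $\psi\rarr\bot$ (read at a singleton team) and $\psi\linimpl\bot$ (read at the empty team) realize classical negation of a Dependence Logic sentence, and both then derive the contradiction from the failure of $\Sigma^1_1$ to be closed under complementation. The paper merely instantiates this concretely --- ``the universe is infinite'' plus the compactness theorem for $\Sigma^1_1$ in the general case, and 3-colorability in the finite case --- which is a hands-on version of your appeal to $\Sigma^1_1\neq\Pi^1_1$ via well-foundedness and to an NP-complete property.
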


\begin{proof} Let $\phi(x_1)$ be a formula of Dependence Logic in the empty vocabulary such that for any team $T$: $\MM,T\models\phi(x_1)$ if and only if $A$ is infinite\endnote{The free variable $x_1$ plays no role in this.}. Let $\bot$ denote a sentence in the empty vocabulary, only satisfied by the empty team, e.g. $\forall x.x=x\wedge\neg x=x$.
Suppose there were an Existential Second Order sentence $\tau(R)$ such that
a model $\MM$ and a team $T$ on $\{x_1\}$ satisfy $\phi(x_1)\to\bot$ if and only if $(\MM,\rel{T})$ satisfies $\tau(R)$. If $\MM$ is any finite model and $T=\{s\}$, where $s(x_1)\in M$, then  $\MM,T\models\phi(x_1)\to\bot$, whence $(\MM,\{s(x_1)\})\models\tau(R)\wedge\exists x_1R(x_1)$. By the Compactness Theorem of Existential Second Order Logic, $\tau(R)\wedge\exists x_1R(x_1)$ has an infinite model $(\MM',\rel{T'})$. Thus $\MM'$ and the team $T'$ satisfy $\phi(x_1)\to\bot$. Moreover, $T'\ne\vn$. By the definition of the semantics of $\to$, since $T'$ satisfies $\phi(x_1)$ in $\MM'$, $T'$ must satisfy $\bot$, a contradiction.

Let us then consider finite models. It is easy to write down   a formula $\phi(x_1)$ of Dependence Logic in the vocabulary of graphs such that for any team $T\ne\vn$: $\MM,T\models\phi(x_1)$ if and only if $\MM$ is 3-colorable. Let $\bot$ be as above. If $\MM$ is any graph that is not 3-colorable and $T=\{s\}$, where $s(x_1)\in M$, then  $\MM,T\models\phi(x_1)\to\bot$. On the other hand, suppose $\MM$ is 3-colorable, but $\MM$ and some team $\{s\}$ satisfy $\phi(x_1)\to\bot$.  By the definition of the semantics of $\to$, since $\{s\}$ satisfies $\phi(x_1)$ in $\MM$, $\{s\}$ must satisfy $\bot$, a contradiction. Thus a graph $\MM$ and a team $\{s\}$ satisfy $\phi(x_1)\to\bot$ if and only if $\MM$ is not 3-colorable. Suppose now there were an Existential Second Order sentence $\tau(R)$ such that
a graph $\MM$ and a team $T$ satisfy $\phi(x_1)\to\bot$ if and only if $(\MM,\rel{T})$ satisfies $\tau(R)$. Then we could check if a graph $\MM$ is not 3-colorable by checking if  $\tau(R)$ is satisfied by $\MM$ and and a team $\{s\}$, where $s$ can be any assignment. The latter is NP, so we get NP=co-NP.

The same argument can be used to show that $\linimpl$ leads outside of $\Sigma^1_1$: Suppose $\phi(x_1)$ is as above and there is an Existential Second Order sentence $\tau(R)$ such that
a model $\MM$ and a team $T$ satisfy $\bot \, \wedge \, (\phi(x_1)\linimpl\bot)$ if and only if $(\MM,\rel{T})$ satisfies $\tau(R)$. If $\MM$ is any finite model and $T=\vn$, then  $\MM,T\models\bot \, \wedge \, (\phi\linimpl\bot)$, whence $(\MM,\vn)\models\tau(R)$. By the Compactness Theorem of Existential Second Order Logic, $\tau(R)$ has an infinite model $(\MM',\rel{T'})$. Thus $\MM'$ and the team $T'$ satisfy $\bot \, \wedge \, (\phi\linimpl\bot)$. In particular, $T'=\vn$ and $\vn$ satisfies $\phi\linimpl\bot$. Since in this model any $\{s\}$ satisfies $\phi$, by the definition of the semantics of $\linimpl$, $\{s\}$ satisfies $\bot$, a contradiction.
\end{proof}

\noindent The proof actually shows that $\BID$ fails to satisfy the Compactness Theorem. A similar argument shows that $\BID$ fails to satisfy the Downward L\"owenheim Skolem Theorem. 

\begin{proposition}
There is a translation of $\BID$ into Full Second Order Logic.
\end{proposition}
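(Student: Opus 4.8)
There is a translation of $\BID$ into Full Second Order Logic.

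The plan is to give a compositional translation by induction on the structure of $\BID$-formulas, following the pattern of Hodges' translation~(\ref{sol}) into $\Sigma^1_1$ but now allowing the full quantifier alternation of Second Order Logic. To each $\BID$-formula $\phi$ with free variables in $X = \{x_1, \ldots, x_n\}$ we associate a Second Order formula $\tau_\phi(R)$, where $R$ is a fresh $n$-ary relation symbol intended to name the team, such that for every model $\MM$ and every team $T$ on $X$,
\[ \MM, T \modX \phi \;\; \Longleftrightarrow \;\; (\MM, \rel{T}) \models \tau_\phi(R) \, . \]
The base cases (literals and the dependence predicate) and the cases for $\wedge$, $\vee$, $\otimes$ and the quantifiers $\forall, \exists$ are already handled by Hodges within $\Sigma^1_1$; the point of the present proposition is simply that the two genuinely new connectives, the intuitionistic implication $\rarr$ and the linear implication $\linimpl$, can also be translated once we permit universal second-order quantification.

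The key observation is that both new connectives quantify \emph{universally over teams}, and a team over $X$ is exactly a subset of $A^n$, \ie an $n$-ary relation on $A$. Thus for intuitionistic implication I would set, reading off the team-semantic clause $T \models A \rarr B \equiv \forall U \subseteq T.\,(U \models A \Rightarrow U \models B)$,
\[ \tau_{A \rarr B}(R) \;\; \equiv \;\; \forall S. \, \bigl( (\forall \bar{x}.\, S(\bar{x}) \rarr R(\bar{x})) \; \rarr \; (\tau_A(S) \rarr \tau_B(S)) \bigr) \, , \]
where $S$ is a fresh $n$-ary relation variable encoding the subteam $U$, and the inner first-order formula $\forall \bar{x}.\, S(\bar{x}) \rarr R(\bar{x})$ expresses $U \subseteq T$. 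For linear implication, using $T \models A \linimpl B \equiv \forall U.\,(U \models A \Rightarrow T \cup U \models B)$, I would quantify universally over an auxiliary relation $S$ for $U$ and introduce a further relation $S'$ constrained (by a first-order formula) to be the union $R \cup S$:
\[ \tau_{A \linimpl B}(R) \;\; \equiv \;\; \forall S. \, \bigl( \tau_A(S) \; \rarr \; \forall S'. \, (\mathsf{union}(R,S,S') \rarr \tau_B(S')) \bigr) \, , \]
where $\mathsf{union}(R,S,S')$ abbreviates $\forall \bar{x}.\, (S'(\bar{x}) \leftrightarrow (R(\bar{x}) \vee S(\bar{x})))$. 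Since Full Second Order Logic is closed under arbitrary quantification over relations and under all the Boolean and first-order connectives, each such $\tau_\phi(R)$ is again a legitimate Second Order formula, so the induction goes through.

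The verification that each clause is correct is a routine unwinding of the team-semantic definitions against the semantics of Second Order Logic, using the induction hypothesis for the immediate subformulas. The one point deserving care — and the place where the present translation departs essentially from Hodges' — is that the new clauses introduce \emph{universal} second-order quantifiers, so the resulting sentences no longer lie in $\Sigma^1_1$; this is exactly what the preceding proposition shows to be unavoidable. There is no genuine obstacle beyond bookkeeping: the main thing to check is that the encoding of subteams and unions as first-order conditions on the relation variables faithfully mirrors the set-theoretic operations $U \subseteq T$ and $T \cup U$, and that downwards closure of denotations is respected automatically because every clause quantifies over all relations $S$ satisfying the relevant inclusion constraint.
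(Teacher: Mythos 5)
Your proposal is correct and matches the paper's own proof essentially clause for clause: the paper likewise extends Hodges' translation only at the two new connectives, giving $\forall S(\forall \vec{x}(S(\vec{x})\to R(\vec{x}))\to(\tau_\phi(S)\to\tau_\psi(S)))$ for intuitionistic implication and a universally quantified union relation for linear implication, exactly as you do. No differences worth noting.
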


\begin{proof} We follow \cite{Hod97b} (see also \cite{Vaan}) and present only the additions needed over and above Dependence Logic and IF-logic:
\begin{eqnarray*}
\tau_{\phi\linimpl\psi}(R) &=&\forall S(\tau_\phi(S)\to\forall U(\forall \vec{x}(U(\vec{x})\leftrightarrow(S(\vec{x})\vee
R(\vec{x})))\to\tau_\psi(U)))\\
\tau_{\phi\to\psi}(R) &=&\forall S(\forall \vec{x}(S(\vec{x})\to
R(\vec{x}))\to(\tau_\phi(S)\to\tau_\psi(S))) \, .
\end{eqnarray*}\end{proof}

\noindent In conclusion, we may say that $\Bm$ and $\BID$ seem to have a more robust and uniform algebraic structure than Dependence Logic and IF-logic. We anticipate that this is reflected also in an effective proof theory, still to be developed. On the other hand the price of this seems to be that ``nice" model theoretic properties are lost, at least in the full semantics. Perhaps there are some underlying, hitherto unidentified, reasons why logics developed for dependence cannot simultaneously have a ``nice" model theory and effective proof theory. After all, we know from Lindstr\"om's Theorem (\cite{MR0244013}) that there are intrinsic obstacles to having model-theoretically defined extensions of first order logic with both nice proof theory and nice model theory. However, we have a trivalent logic, unlike the setting considered by Lindstr\"om. So it is too early to say whether there are general reasons why $\BID$ does not satisfy Compactness and other model theoretic properties familiar from Dependence Logic, or whether we  have just not found the right concepts yet.

\theendnotes

\bibliographystyle{klunamed}

\bibliography{bibfile}

\end{article}
\end{document}